\newtheorem{theorem}{Theorem}[section]
\newtheorem{proposition}[theorem]{Proposition}
\newtheorem{lemma}[theorem]{Lemma}
\newtheorem{corollary}[theorem]{Corollary}
\newtheorem{example}[theorem]{Example}
\DeclareMathOperator{\lcm}{lcm}
\DeclareMathOperator{\Ima}{Im}
\title{Some results on the subadditivity condition of syzygies}
\author{Abed Abedelfatah}
\address{Department of Mathematics, ORT Braude College, 2161002 Karmiel, Israel}
\email{abed@braude.ac.il}
\keywords{Betti numbers, simplicial complex, monomial ideal, subadditivity condition}
\begin{document}
\maketitle
\begin{abstract}

Let $S=K[x_1,\ldots,x_n]$, where $K$ is a field, and $t_i$ denotes the maximal shift in the minimal graded free $S$-resolution of the graded algebra $S/I$ at degree $i$. In this paper, we prove:\\
\begin{itemize}
  \item If $I$ is a monomial ideal of $S$ and $a\geq b-1\geq0$ are integers such that $a+b\leq\mathrm{proj~dim}(S/I)$, then $$t_{a+b}\leq t_a+t_1+t_2+\cdots+t_b-\frac{b(b-1)}{2}.$$
  \item If $I=I_{\Delta}$ where $\Delta$ is a simplicial complex such that $\dim(\Delta)< t_a-a$ or $\dim(\Delta)< t_b-b$, then $$t_{a+b}\leq t_a+t_b.$$
  \item If $I$ is a monomial ideal that minimally generated by $m_1,\dots,m_r$ such that $\frac{\lcm(m_1,\dots,m_r)}{\lcm(m_1,\dots,\widehat{m}_i,\dots,m_r)}\notin K$ for all $i$, where $\widehat{m}_i$ means that $m_i$ is omitted, then $t_{a+b}\leq t_a+t_b$ for all $a,b\geq0$ with $a+b\leq\mathrm{proj~dim}(S/I)$.
\end{itemize}
\end{abstract}

\section{Introduction}

Let $S=K[x_1,\ldots,x_n]$, where $K$ is a field and let $I$ be a graded ideal of $S$ and suppose $S/I$ has minimal graded free $S$-resolution
$$0\rightarrow F_p=\bigoplus_{j\in \mathbb{N}}S(-j)^{\beta _{p,j}}\rightarrow\cdots\rightarrow F_1=\bigoplus_{j\in \mathbb{N}}S(-j)^{\beta _{1,j}}\rightarrow F_0=S\rightarrow S/I\rightarrow0.$$
The numbers $\beta_{i,j}=\beta_{i,j}(S/I)$, where $i,j\geq0$, are called the \emph{graded Betti numbers} of $I$, which count the elements of degree $j$ in a minimal generator of ($i+1$)-th \emph{syzygy}:~ $\mathrm{Syz}_{i+1}(S/I)=\ker~(F_i\rightarrow F_{i-1})$. Let $t_i$ denote the maximal shifts in the minimal graded free $S$-resolution of $S/I$, namely
\[t_i=t_i(S/I):= \max(j:\ \beta_{i,j}(S/I)\neq 0).\]
We say that $I$ satisfies the \emph{subadditivity condition} if $t_{a+b}\leq t_a+t_b$, for all $a,b\geq0$ and $a+b\leq p$, where $p$ is the projective dimension of $I$.

It is known that graded ideals may not satisfy the subadditivity condition as shown by the counter example in \cite[Section 5.4]{Conca-Sub}. However, no counter examples are known for monomial ideals.
For edge ideals of graphs the inequality $t_{a+1}\leq t_a+t_1$ was shown by
Fern{\'a}ndez-Ramos and Gimenez \cite[Theorem 4.1]{Oscar}. The same inequality has been shown later
for any monomial ideal \cite[Corollary 4]{Herzog-Srinivasan} by Srinivasan and Herzog. Yazdan Pour independently proved the same result in \cite[Corollary 3.5]{Yazdan}. Bigdeli and Herzog proved the subadditivity condition, when $I$ is the edge ideal of chordal graph or whisker graph\cite[Theorem 1]{mina-herzog}. Some more results regarding subadditivity have been obtained by Khoury and Srinivasan \cite[Theorem 2.3]{Khoury-Srin}, the author and Nevo
\cite[Theorem 1.3]{Abed-Eran} and Faridi \cite[Theorem 3.7]{Faridi}.

In this paper we prove in Theorem \ref{ab4} that if $I$ is a monomial ideal of $S$ and $a\geq b-1\geq0$ are integers such that $a+b\leq\mathrm{proj~dim}(S/I)$, then $$t_{a+b}\leq t_a+t_1+t_2+\cdots+t_b-\frac{b(b-1)}{2},$$
which generalizes the well-known inequality $t_{a+1}\leq t_a+t_1$.

In Theorem \ref{ab3}, we prove that if $I=I_{\Delta}$ where $\Delta$ is a simplicial complex such that $\dim(\Delta)< t_a-a$ or $\dim(\Delta)< t_b-b$, then $t_{a+b}\leq t_a+t_b$.
The proof of both Theorems \ref{ab3} and \ref{ab4} uses a combinatorial topological argument.

In Theorem \ref{ab5}, we prove algebraically, using Taylor resolution, that the subadditivity condition holds when $I$ is a monomial ideal that minimally generated by $m_1,\dots,m_r$ such that $\frac{\lcm(m_1,\dots,m_r)}{\lcm(m_1,\dots,\widehat{m}_i,\dots,m_r)}\notin K$ for all $i$, where $\widehat{m}_i$ means that $m_i$ is omitted.

\section{Preliminaries }
Fix a field $K$.
Let $S=K[x_1,\dots,x_n]$ be the graded polynomial ring with $\deg(x_i)=1$ for all $i$, and $M$ be a graded $S$-module. The integer $\beta_{i,j}^S(M)=\dim_KTor_i^S(M,K)_j$ is called the $(i,j)$\emph{th graded Betti number of} $M$. Note that if $I$ is a graded ideal of $S$, then $\beta_{i+1,j}^S(S/I)=\beta_{i,j}^S(I)$ for all $i,j\geq 0$.

For a simplicial complex $\Delta$ on the vertex set $\Delta_0=[n]=\{1,\dots,n\}$, its \emph{Stanley-Reisner ideal} $I_{\Delta}\subset S$ is the ideal generated by the squarefree monomials $x_F=\prod_{i\in F}x_i$ with $F\notin \Delta$, $F\subset [n]$. The dimension of the face $F$ is $|F|-1$ and the \emph{dimension of $\Delta$} is $\max\{\dim F~:~F\in\Delta\}$.

For $W\subset V$, we write $$\Delta[W]=\{F\in\Delta~:~F\subset W\}$$ for the induced subcomplex of $\Delta$ on $W$.
 We denote by $\beta_i(\Delta)=\dim_K \widetilde{H}_i(\Delta;K)$ the dimension of the $i$-th reduced homology group of $\Delta$ with coefficients in $K$.
The following result is known as Hochster's formula for graded Betti numbers.

\begin{theorem}[Hochster]
Let $\Delta$ be a simplicial complex on $[n]$. Then
$$\beta_{i,i+j}(S/I_{\Delta})=\sum_{W\subset[n],~|W|=i+j}\beta_{j-1}(\Delta[W];K)$$ for all $i,j\geq0$.
\end{theorem}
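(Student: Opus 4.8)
The plan is to compute the $\mathbb{Z}^n$-graded Betti numbers $\beta_{i,\mathbf{a}}(S/I_\Delta)=\dim_K\operatorname{Tor}_i^S(S/I_\Delta,K)_{\mathbf{a}}$ directly from the Koszul complex, and then obtain the stated formula by collecting all multidegrees of a fixed total degree $i+j$. First I would recall that the Koszul complex $K_\bullet=\bigwedge^\bullet S^n$ on $x_1,\dots,x_n$ is a $\mathbb{Z}^n$-graded free resolution of $K=S/\mathfrak m$, with $K_i=\bigoplus_{F\subseteq[n],\,|F|=i}S(-\mathbf{e}_F)\,e_F$ and differential $d(e_F)=\sum_{\ell\in F}(-1)^{\sigma(\ell,F)}x_\ell\,e_{F\setminus\{\ell\}}$, where $\mathbf{e}_F$ is the characteristic vector of $F$. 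Hence $\operatorname{Tor}_i^S(S/I_\Delta,K)=H_i\!\left(S/I_\Delta\otimes_S K_\bullet\right)$, and since this identification is $\mathbb{Z}^n$-graded it suffices to understand each strand $\left(S/I_\Delta\otimes_S K_\bullet\right)_{\mathbf{a}}$ for $\mathbf{a}\in\mathbb{Z}^n$.

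Second, I would analyze these strands. A $K$-basis of $\left(S/I_\Delta\otimes_S K_\bullet\right)_{\mathbf{a}}$ in homological degree $i$ is given by the elements $x^{\mathbf{a}-\mathbf{e}_F}e_F$ with $|F|=i$, $F\subseteq\operatorname{supp}(\mathbf{a})$, and $x^{\mathbf{a}-\mathbf{e}_F}\notin I_\Delta$, i.e. $\operatorname{supp}(\mathbf{a}-\mathbf{e}_F)\in\Delta$. If some entry of $\mathbf{a}$ is negative the strand is zero, so only $\mathbf{a}\in\mathbb{N}^n$ can contribute. If some entry $a_\ell\ge 2$, I claim the strand is acyclic: writing $A=\operatorname{supp}(\mathbf{a})$ and $B=\{j:a_j=1\}$, one has $\operatorname{supp}(\mathbf{a}-\mathbf{e}_F)=A\setminus(F\cap B)$, which does not depend on whether $\ell\in F$ (as $\ell\notin B$); so the map $h$ sending $x^{\mathbf{a}-\mathbf{e}_F}e_F\mapsto\pm\,x^{\mathbf{a}-\mathbf{e}_{F\cup\{\ell\}}}e_{F\cup\{\ell\}}$ for $\ell\notin F$ and to $0$ for $\ell\in F$ is well defined — here $x^{\mathbf{a}-\mathbf{e}_{F\cup\{\ell\}}}=x^{\mathbf{a}-\mathbf{e}_F}/x_\ell$ is a genuine monomial precisely because $a_\ell\ge 2$ — and, after fixing signs, it is a contracting homotopy, $dh+hd=\operatorname{id}$. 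Consequently $\operatorname{Tor}_i^S(S/I_\Delta,K)$ is concentrated in squarefree multidegrees $\mathbf{a}=\mathbf{e}_W$, $W\subseteq[n]$.

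Third, for a squarefree multidegree $\mathbf{e}_W$ I would identify the strand with the reduced simplicial cochain complex of $\Delta[W]$. Setting $G=W\setminus F$, the degree-$i$ basis of $\left(S/I_\Delta\otimes_S K_\bullet\right)_{\mathbf{e}_W}$ is in bijection with the faces $G\in\Delta[W]$ of cardinality $|W|-i$ (dimension $|W|-i-1$), and the computation $x_\ell\cdot x_{W\setminus F}=x_{W\setminus(F\setminus\{\ell\})}$ shows that the Koszul differential carries the basis vector of $G$ to $\sum_{\ell\in W\setminus G}\pm(\text{basis vector of }G\cup\{\ell\})$, i.e. to the simplicial coboundary of $G$ (once orientations are matched). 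Thus the strand is, up to reindexing, the augmented cochain complex $\widetilde C^{\,\bullet}(\Delta[W];K)$ with $\widetilde C^{\,|W|-i-1}$ placed in homological position $i$, so $\operatorname{Tor}_i^S(S/I_\Delta,K)_{\mathbf{e}_W}\cong\widetilde H^{\,|W|-i-1}(\Delta[W];K)$, whose dimension over the field $K$ equals $\dim_K\widetilde H_{|W|-i-1}(\Delta[W];K)=\beta_{|W|-i-1}(\Delta[W];K)$.

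Finally I would assemble the graded Betti number: $\beta_{i,i+j}(S/I_\Delta)=\sum_{|\mathbf{a}|=i+j}\dim_K\operatorname{Tor}_i^S(S/I_\Delta,K)_{\mathbf{a}}=\sum_{W\subseteq[n],\,|W|=i+j}\dim_K\operatorname{Tor}_i^S(S/I_\Delta,K)_{\mathbf{e}_W}=\sum_{|W|=i+j}\beta_{j-1}(\Delta[W];K)$, using $|W|-i-1=j-1$. The only genuinely delicate points are the two sign bookkeeping tasks — verifying that $h$ in Step 2 satisfies $dh+hd=\operatorname{id}$, and that the map in Step 3 is exactly the reduced simplicial coboundary for the chosen orientation of faces (including the augmentation term, with $\widetilde C^{-1}=K$ spanned by $\emptyset$, so that $\widetilde H_{-1}$ is handled correctly); everything else is a bijective count. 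These are standard but must be carried out with care.
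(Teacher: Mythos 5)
Your argument is correct. Note, however, that the paper does not prove this statement at all: it is quoted in the Preliminaries as Hochster's formula, a known result, and is used as a black box in the proofs of Proposition \ref{ab2} and Theorems \ref{ab3} and \ref{ab4}. What you have written is the standard modern proof via the $\mathbb{Z}^n$-graded strands of the Koszul complex: the reduction to squarefree multidegrees by the contracting homotopy (valid exactly because $a_\ell\geq 2$ keeps the support, hence the face condition, unchanged), the identification of the strand in multidegree $\mathbf{e}_W$ with the augmented simplicial cochain complex of $\Delta[W]$ (with $\widetilde{C}^{\,|W|-i-1}$ in homological position $i$, the empty face accounting for $\widetilde{C}^{-1}$), and the passage from cohomology to homology dimensions over the field $K$. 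All steps are sound; the two places you flag as delicate (the sign conventions making $dh+hd=\mathrm{id}$ and matching the Koszul differential with the coboundary) are indeed the only bookkeeping issues and work out in the standard way, since $h$ descends to the quotient strand of $S/I_\Delta\otimes_S K_\bullet$ precisely because it preserves supports. So the proposal is a complete and correct proof of the cited theorem, supplied where the paper gives none.
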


If $\Delta_1$ and $\Delta_2$ are two subcomplexes of $\Delta$ such that $\Delta=\Delta_1\cup \Delta_2$, then there is a long exact sequence of reduced homologies, called the \emph{Mayer-Vietoris sequence}
$$\cdots \rightarrow \widetilde{H}_i(\Delta_1\cap\Delta_2;K)\rightarrow \widetilde{H}_i(\Delta_1;K)\oplus \widetilde{H}_i(\Delta_2;K)\rightarrow \widetilde{H}_i(\Delta;K)\rightarrow \widetilde{H}_{i-1}(\Delta_1\cap\Delta_2;K)\rightarrow\cdots.$$

\section{The main Theorems}
\begin{lemma}\label{ab1}

Let $\Delta=\Delta_1\cup\cdots\cup\Delta_t$ be a union of subcomplexes. If $$\widetilde{H}_{j-r+1}(\bigcap_{m=1}^{r}\Delta_{i_m};K)=0$$ for all $1\leq i_1<\cdots<i_{r}\leq t$, then $\widetilde{H}_j(\Delta;K)=0$.

\end{lemma}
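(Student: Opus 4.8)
The plan is to induct on the number $t$ of subcomplexes, using the Mayer--Vietoris sequence recorded in the preliminaries. For $t=1$ the hypothesis with $r=1$ is exactly $\widetilde H_j(\Delta_1;K)=0$, and $\Delta=\Delta_1$, so there is nothing to prove. Assume $t\ge 2$ and that the statement holds for every union of fewer than $t$ subcomplexes.

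Set $\Gamma=\Delta_1\cup\cdots\cup\Delta_{t-1}$, so that $\Delta=\Gamma\cup\Delta_t$. The Mayer--Vietoris sequence contains the exact segment
$$\widetilde H_j(\Gamma;K)\oplus\widetilde H_j(\Delta_t;K)\to\widetilde H_j(\Delta;K)\to\widetilde H_{j-1}(\Gamma\cap\Delta_t;K),$$
so it suffices to show that the two outer groups vanish. The summand $\widetilde H_j(\Delta_t;K)$ is $0$ by the hypothesis with $r=1$, $i_1=t$. To see $\widetilde H_j(\Gamma;K)=0$, apply the inductive hypothesis to the union $\Gamma=\Delta_1\cup\cdots\cup\Delta_{t-1}$ with the same $j$: for every $1\le i_1<\cdots<i_r\le t-1$ the required vanishing $\widetilde H_{j-r+1}\bigl(\bigcap_{m=1}^r\Delta_{i_m};K\bigr)=0$ is an instance of our assumption.

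It remains to handle $\widetilde H_{j-1}(\Gamma\cap\Delta_t;K)$. Since intersection distributes over union, $\Gamma\cap\Delta_t=\bigcup_{i=1}^{t-1}(\Delta_i\cap\Delta_t)$ is a union of $t-1$ subcomplexes, and I will apply the inductive hypothesis to it with $j$ replaced by $j-1$. For $1\le i_1<\cdots<i_r\le t-1$ one has $\bigcap_{m=1}^r(\Delta_{i_m}\cap\Delta_t)=\Delta_t\cap\bigcap_{m=1}^r\Delta_{i_m}$, an intersection of the $r+1$ original subcomplexes $\Delta_{i_1},\dots,\Delta_{i_r},\Delta_t$, while $(j-1)-r+1=j-(r+1)+1$. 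Hence the needed vanishing $\widetilde H_{(j-1)-r+1}\bigl(\bigcap_{m=1}^r(\Delta_{i_m}\cap\Delta_t);K\bigr)=0$ is precisely our hypothesis for the $(r+1)$-fold intersection. Therefore $\widetilde H_{j-1}(\Gamma\cap\Delta_t;K)=0$, and the Mayer--Vietoris segment above forces $\widetilde H_j(\Delta;K)=0$, completing the induction.

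The argument is essentially bookkeeping; the only point that needs attention is that the hypothesis is self-reproducing under the two reductions used in the induction (passing from $\Delta$ to $\Gamma$, and passing to the traces $\Delta_i\cap\Delta_t$ on $\Delta_t$), together with a careful match of homological degrees, and one should keep the reduced-homology conventions in mind when an intersection is empty (the reduced Mayer--Vietoris sequence remains exact with $\widetilde H_{-1}(\varnothing;K)=K$). There is nothing genuinely hard here; alternatively, the same computation can be organized via the Mayer--Vietoris spectral sequence of the cover $\{\Delta_i\}$, whose $E_1$-page is built from the homology of the $r$-fold intersections, but the inductive proof above is more elementary and self-contained.
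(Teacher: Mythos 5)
Your proof is correct and follows essentially the same route as the paper: induction on $t$ via the Mayer--Vietoris sequence, observing that the intersection of the peeled-off piece with the union of the rest is itself a union of $t-1$ subcomplexes whose $r$-fold intersections are $(r+1)$-fold intersections of the originals, with the homological degree shifted by one. The only cosmetic difference is that you split off $\Delta_t$ while the paper splits off $\Delta_1$, and you spell out the degree bookkeeping and the empty-intersection convention slightly more explicitly.
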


\begin{proof}

We prove the assertion by induction on $t$. It is trivial for $t=1$. Let $t=2$. Using the Mayer-Vietoris sequence $$\cdots \rightarrow \widetilde{H}_j(\Delta_1;K)\oplus \widetilde{H}_j(\Delta_2;K)\rightarrow \widetilde{H}_j(\Delta;K)\rightarrow \widetilde{H}_{j-1}(\Delta_1\cap\Delta_2;K)\rightarrow\cdots$$ and the assumptions $\widetilde{H}_j(\Delta_1;K)=\widetilde{H}_j(\Delta_2;K)=\widetilde{H}_{j-1}(\Delta_1\cap\Delta_2;K)=0$, we get $\widetilde{H}_j(\Delta;K)=0$.

Let $t>2$. Now, we consider the Mayer-Vietoris sequence
$$\cdots \rightarrow \widetilde{H}_j(\Delta_1;K)\oplus \widetilde{H}_j(\Delta_2\cup\cdots\cup\Delta_t;K)\rightarrow \widetilde{H}_j(\Delta;K)\rightarrow\widetilde{H}_{j-1}((\Delta_1\cap\Delta_2)\cup\cdots\cup(\Delta_1\cap\Delta_t);K)\rightarrow\cdots$$
by induction assumption, we have
$$\widetilde{H}_j(\Delta_2\cup\cdots\cup\Delta_t;K)=\widetilde{H}_{j-1}((\Delta_1\cap\Delta_2)\cup\cdots\cup(\Delta_1\cap\Delta_t);K)=0.$$
Hence $\widetilde{H}_j(\Delta;K)=0$ as desired.

\end{proof}

\begin{proposition}\label{ab2}
Let $\Delta$ be a simplicial complex on the set $[n]$ and $W\subseteq[n]$ such that $|W|=t_a+s+l+1$, where $a\leq\mathrm{proj~dim}(S/I_{\Delta})$, $s\geq0$ and $l\geq1$. Then for all $A\subseteq W$ with $|A|=s'+l$ where $0\leq s'\leq s$, we have
$$\widetilde{H}_{t_a-a+s}(\bigcup_{B\subseteq A:~|B|=l}(\Delta[W\setminus B];K))=0.$$
\end{proposition}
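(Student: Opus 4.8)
The plan is to argue by induction on $|A|$, using a two-piece Mayer--Vietoris decomposition at each step, with Hochster's formula supplying the base case. The only consequence of Hochster I will need is the following: since $\beta_{a,m}(S/I_\Delta)=\sum_{|W'|=m}\beta_{m-a-1}(\Delta[W'];K)$ and the left-hand side vanishes for $m>t_a$, every subset $W'\subseteq[n]$ with $|W'|>t_a$ satisfies $\widetilde H_{|W'|-a-1}(\Delta[W'];K)=0$. I also use repeatedly the identity $\Delta[X]\cap\Delta[Y]=\Delta[X\cap Y]$ for induced subcomplexes. The induction must be carried out over all admissible tuples $(\Delta,W,A,s,l,a)$ simultaneously, because the reductions below alter $W$, $s$ and $l$, not just $A$.

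For the base case $s'=0$, we have $|A|=l$, so $A$ is the unique $l$-subset of $A$ and the union under consideration is the single complex $\Delta[W\setminus A]$, with $|W\setminus A|=t_a+s+1>t_a$. The Hochster consequence immediately gives $\widetilde H_{t_a-a+s}(\Delta[W\setminus A];K)=0$.

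For the inductive step, assume $s'\ge 1$ (hence $s\ge 1$). Fix $v\in A$ and split the $l$-subsets $B\subseteq A$ according to whether $v\in B$, writing $\Gamma:=\bigcup_{|B|=l}\Delta[W\setminus B]=\Gamma_1\cup\Gamma_2$ with $\Gamma_1$ ranging over those $B$ containing $v$ and $\Gamma_2$ over those avoiding $v$. First, $\Gamma_2=\bigcup_{B\subseteq A\setminus\{v\},\,|B|=l}\Delta[W\setminus B]$ is an instance of the Proposition for $(W,A\setminus\{v\})$ with the same $s,l$ --- here $|A\setminus\{v\}|=(s'-1)+l$ and $0\le s'-1\le s$ --- so the induction hypothesis yields $\widetilde H_{t_a-a+s}(\Gamma_2;K)=0$. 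Next, writing $B=\{v\}\cup B'$ with $B'\subseteq A\setminus\{v\}$, $|B'|=l-1$, gives $\Gamma_1=\bigcup_{B'}\Delta[(W\setminus\{v\})\setminus B']$; if $l\ge 2$ this is the Proposition for $(W\setminus\{v\},A\setminus\{v\})$ with parameters $s$ and $l-1$ (note $|W\setminus\{v\}|=t_a+s+(l-1)+1$ and $|A\setminus\{v\}|=s'+(l-1)$), and if $l=1$ then $\Gamma_1=\Delta[W\setminus\{v\}]$ with $|W\setminus\{v\}|=t_a+s+1>t_a$, to which the Hochster consequence applies; either way $\widetilde H_{t_a-a+s}(\Gamma_1;K)=0$. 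Finally, distributing intersections over unions and using $\Delta[X]\cap\Delta[Y]=\Delta[X\cap Y]$, one gets $\Gamma_1\cap\Gamma_2=\bigcup\Delta[W\setminus(B\cup C)]$ over all pairs with $v\in B$, $v\notin C$, $|B|=|C|=l$; since $B\cup C\supseteq\{v\}\cup C$ always, and the equality $B\cup C=\{v\}\cup C$ is attained by choosing $B'\subseteq C$, this union collapses to $\bigcup_{C\subseteq A\setminus\{v\},\,|C|=l}\Delta[(W\setminus\{v\})\setminus C]$, which is the Proposition for $(W\setminus\{v\},A\setminus\{v\})$ with parameters $s-1$ and $l$ (here $|W\setminus\{v\}|=t_a+(s-1)+l+1$, $|A\setminus\{v\}|=(s'-1)+l$, and $0\le s'-1\le s-1$), giving $\widetilde H_{t_a-a+s-1}(\Gamma_1\cap\Gamma_2;K)=0$. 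Feeding these three vanishings into the Mayer--Vietoris sequence
\[\widetilde H_{t_a-a+s}(\Gamma_1;K)\oplus\widetilde H_{t_a-a+s}(\Gamma_2;K)\longrightarrow\widetilde H_{t_a-a+s}(\Gamma;K)\longrightarrow\widetilde H_{t_a-a+s-1}(\Gamma_1\cap\Gamma_2;K)\]
forces $\widetilde H_{t_a-a+s}(\Gamma;K)=0$, as wanted.

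The step I expect to be the real obstacle is the collapse of $\Gamma_1\cap\Gamma_2$ onto the family indexed only by the $l$-subsets $C\subseteq A\setminus\{v\}$: this is precisely what keeps the intersection term inside the class of objects the Proposition speaks about, and it is also the reason one cannot simply apply the general Mayer--Vietoris vanishing of Lemma~\ref{ab1} to all $\binom{|A|}{l}$ pieces at once --- an arbitrary multiple intersection $\Delta[W\setminus(B_1\cup\cdots\cup B_r)]$ has size that is not determined by $r$, while Hochster controls only one homological degree per induced subcomplex. The rest is routine bookkeeping: checking that each reduced instance still satisfies $0\le s'_{\mathrm{new}}\le s_{\mathrm{new}}$ and $l_{\mathrm{new}}\ge 1$ (this is where $s'\le s$ and the separate treatment of $l=1$ are used) and that $|A|$ strictly decreases in all three reductions, so the induction is well founded.
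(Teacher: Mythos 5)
Your proof is correct. All three reductions are instances of the Proposition with strictly smaller $|A|$ and admissible parameters (you correctly note that $s'\ge 1$ forces $s\ge 1$, so the intersection instance with $s-1$ is legitimate, and the case $l=1$ is correctly peeled off for $\Gamma_1$), the collapse of $\Gamma_1\cap\Gamma_2$ onto the maximal pieces $\Delta[(W\setminus\{v\})\setminus C]$ is justified exactly as you say, and the Hochster consequence gives the base case and the degenerate branches.

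The route is genuinely different from the paper's in its organization, though it lives in the same circle of ideas. The paper inducts on $l$: for $l=1$ it feeds all the singleton pieces at once into Lemma~\ref{ab1} (the $t$-fold Mayer--Vietoris vanishing criterion), and for $l>1$ it partitions the $l$-subsets of $A$ into $s'+1$ blocks $\mathcal{B}_j$ according to the first element of $A$ they contain, shows each $r$-fold intersection $\Delta_{i_1}\cap\cdots\cap\Delta_{i_r}$ is again an instance of the Proposition with $l-1$ and $s-r+1$, and applies Lemma~\ref{ab1} again. You instead induct on $|A|$, split off a single vertex $v$, and use only the two-term Mayer--Vietoris sequence; the price is that one of your three branches decreases $l$ and another decreases $s$, so the induction must run over all tuples simultaneously, but the payoff is that Lemma~\ref{ab1} is never needed (you have effectively inlined and specialized its proof). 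Your version is more elementary and makes the key combinatorial point -- that an intersection of unions of induced subcomplexes collapses to the union of the maximal ones, keeping everything inside the class of objects the Proposition quantifies over -- more visible; the paper's version front-loads the homological bookkeeping into Lemma~\ref{ab1}, which it then reuses verbatim in Theorems~\ref{ab3} and~\ref{ab4}.
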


\begin{proof}
We prove the proposition by induction on $l$. Let $l=1$. Note that for all $1\leq r\leq |A|$, we have $\beta_{a,t_a+s+2-r}(S/I_{\Delta})=0$. So $$\widetilde{H}_{t_a-a+s-r+1}(\Delta[W\setminus B_1]\cap\cdots\cap\Delta[W\setminus B_r];K)=0.$$
for all distinct singleton subsets $B_1,\dots,B_r$ of $A$. By Lemma \ref{ab1}, we obtain that $$\widetilde{H}_{t_a-a+s}(\bigcup_{B\subseteq A:~|B|=1}(\Delta[W\setminus B];K))=0.$$
Let $l>1$ and $A=\{b_1,\dots,b_{s'},b_{s'+1},\dots,b_{s'+l}\}$. For $1\leq j\leq s'+1$, denote by $B_j$ the set of all $B\subseteq A$ such that $|B|=l$, $b_j\in B$ and $\{b_1,\dots,b_{j-1}\}\cap B=\emptyset$. For $1\leq j\leq s'+1$, let $\Delta_{j}=\bigcup_{B\in B_j}\Delta[W\setminus B]$. We have that $$\bigcup_{B\subseteq A:~|B|=l}(\Delta[W\setminus B])=\Delta_1\cup\cdots \cup\Delta_{s'+1}.$$
Let $1\leq i_1<\cdots<i_{r}\leq s'+1$. Denote by $W'$ the set $W\setminus\{b_{i_1},\dots,b_{i_r}\}$ and by $A'$ the set $A\setminus\{b_1,\dots,b_{i_r}\}$. Let $\overline{s}=s-r+1$ and $\overline{l}=l-1$. We have that $|W'|=t_a+\overline{s}+\overline{l}+1$ and $$\Delta_{i_1}\cap\cdots\cap\Delta_{i_r}=\bigcup_{B\subseteq A':~|B|=\overline{l}}\Delta[W'\setminus B].$$
By induction hypothesis, we have that $$\widetilde{H}_{t_a-a+\overline{s}}(\Delta_{i_1}\cap\cdots\cap\Delta_{i_r};K)=0.$$
By Lemma \ref{ab1} we get $$\widetilde{H}_{t_a-a+s}(\bigcup_{B\subseteq A:~|B|=l}(\Delta[W\setminus B];K))=0.$$
\end{proof}
Now, we prove the main results.

\begin{theorem}\label{ab3}
Let $\Delta$ be a simplicial complex on the set $[n]$ and $a,b$ are non negative integers such that $a,b\leq\mathrm{proj~dim}(S/I_{\Delta})$. If $\dim(\Delta)< t_a-a$ or $\dim(\Delta)< t_b-b$, then $t_{a+b}\leq t_a+t_b$.
\end{theorem}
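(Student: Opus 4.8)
The plan is to use Hochster's formula to reduce the statement $t_{a+b}\le t_a+t_b$ to a vanishing statement about reduced homology of induced subcomplexes, and then derive that vanishing from Proposition \ref{ab2}. Concretely, suppose without loss of generality that $\dim(\Delta)<t_b-b$. To show $t_{a+b}\le t_a+t_b$ it suffices, by Hochster's formula, to show that for every $W\subseteq[n]$ with $|W|=t_a+t_b+k$ for $k\ge 1$ we have $\widetilde H_{t_a+t_b+k-a-b-1}(\Delta[W];K)=0$; equivalently, writing the homological degree as $t_a-a+s$ with $s=t_b-b+k-1\ge t_b-b$, we must show $\widetilde H_{t_a-a+s}(\Delta[W];K)=0$ whenever $|W|=t_a+s+l+1$ with $l=b$ (so that $t_a+s+l+1 = t_a+(t_b-b+k-1)+b+1 = t_a+t_b+k$). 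This is exactly the shape of the conclusion of Proposition \ref{ab2} in the special case $A=W$, $s'=s$, $l=b$ — but with the crucial difference that there we need the union over $B\subseteq A$ with $|B|=l$ of the $\Delta[W\setminus B]$ to collapse down to $\Delta[W]$ itself.

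The key observation making that collapse work is the dimension hypothesis. Since $\dim(\Delta)<t_b-b=l-b+(t_b-b)\le$ ... more to the point, $\dim(\Delta[W])\le\dim(\Delta)<t_b-b\le s$ (using $s\ge t_b-b$ and $b\le t_b$, but really we just need $\dim(\Delta)<t_a-a+s+1-l+\ldots$; the honest bound to check is that $t_a-a+s$ exceeds $\dim(\Delta)$, which holds because $t_a-a\ge 0$ and $s\ge t_b-b>\dim(\Delta)$, giving $t_a-a+s>\dim(\Delta)$). Therefore $\widetilde H_{t_a-a+s}(\Delta[W];K)=0$ automatically for dimension reasons, and moreover every face of $\Delta[W]$ lies in some $\Delta[W\setminus B]$ with $|B|=l$ — indeed any face $F$ has $|F|\le\dim(\Delta)+1\le s+1\le |W|-l$ (using $|W|=t_a+s+l+1\ge s+l+1$ when $t_a\ge 0$), so there is room to delete $l$ vertices of $W$ outside $F$. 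Hence $\bigcup_{B\subseteq W:\,|B|=l}\Delta[W\setminus B]=\Delta[W]$, and Proposition \ref{ab2} with $A=W$, $s'=s$ gives precisely $\widetilde H_{t_a-a+s}(\Delta[W];K)=0$, completing the argument.

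I would organize the write-up as: (i) fix $W$ with $|W|=t_a+t_b+k$, $k\ge1$, and translate the desired Betti vanishing via Hochster into the homology statement in homological degree $d:=t_a+t_b+k-a-b-1$; (ii) set $s:=d-(t_a-a)=t_b-b+k-1$ and observe $s\ge 0$ and in fact $s>\dim(\Delta)$ under the hypothesis $\dim\Delta<t_b-b$ (the symmetric case swaps $a$ and $b$); (iii) check the combinatorial identity $\bigcup_{|B|=l}\Delta[W\setminus B]=\Delta[W]$ with $l=b$, for which the inequality $\dim(\Delta)+1\le |W|-l$ must be verified — this uses $t_a\ge 0$, $s\ge 0$ and the definition of $|W|$; (iv) apply Proposition \ref{ab2} with $A=W$, $s'=s$, $l=b$ (legitimate since $0\le s'\le s$ and $l\ge 1$; the degenerate case $b=0$ is trivial because then $t_{a}\le t_a+t_0=t_a$); conclude. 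The main obstacle is bookkeeping the index arithmetic so that the homological degree in Proposition \ref{ab2}, namely $t_a-a+s$, matches the Hochster degree $j-1$ with $j=d+1$, and simultaneously the cardinality constraint $|W|=t_a+s+l+1$ is met; once the substitution $s=t_b-b+k-1$, $l=b$ is pinned down everything lines up, but one must be careful that the hypothesis is used in the right place (to force both the automatic dimension vanishing and the covering identity), and that the case distinction on which of $\dim\Delta<t_a-a$ or $\dim\Delta<t_b-b$ holds is handled by the evident symmetry in $a,b$.
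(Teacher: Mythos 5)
Your overall strategy (Hochster's formula, a covering of $\Delta[W]$ by the subcomplexes $\Delta[W\setminus B]$, then Proposition \ref{ab2}) is the same as the paper's, and your index bookkeeping ($s=t_b-b+r$, $l=b$, $|W|=t_a+s+l+1$) matches the paper's exactly. But the way you invoke Proposition \ref{ab2} is not legitimate: you take $A=W$ with $s'=s$, whereas the proposition requires $|A|=s'+l$, and here $|W|=t_a+s+l+1>s+l\ge s'+l$ always (since $t_a\ge 0$). So the hypothesis $|A|=s'+l$ fails for $A=W$, and the union $\bigcup_{B\subseteq W,\ |B|=b}\Delta[W\setminus B]$ is not one to which the proposition applies. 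The paper avoids this by choosing $A$ to be a \emph{proper} subset of $W$ of size exactly $s'+l=t_b+r$; the same dimension count you use (every face has at most $\dim\Delta+1\le t_b-b$ vertices, hence misses at least $b$ elements of $A$) still yields $\Delta[W]=\bigcup_{B\subseteq A,\ |B|=b}\Delta[W\setminus B]$ for that smaller $A$, and then Proposition \ref{ab2} applies as stated. Your argument is therefore fixable by shrinking $A$, but as written the key step misapplies the proposition.

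Separately, your parenthetical remark that $\widetilde H_{t_a-a+s}(\Delta[W];K)$ vanishes ``automatically for dimension reasons'' is correct and, taken seriously, makes the covering identity and Proposition \ref{ab2} unnecessary for this theorem: the Hochster homological degree is $(t_a-a)+(t_b-b)+r\ge t_b-b>\dim\Delta\ge\dim\Delta[W]$ (using $t_a\ge a$ and $r\ge0$), so every summand in Hochster's formula for $\beta_{a+b,\,t_a+t_b+r+1}(S/I_\Delta)$ vanishes outright. That one-line observation is already a complete, and simpler, proof of the theorem; if you write it up, I would lead with that rather than repairing the application of Proposition \ref{ab2}.
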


\begin{proof}
Without loss of generality, assume that $\dim(\Delta)< t_b-b$. We have to show that $\beta_{a+b,t_a+t_b+r+1}(S/I_{\Delta})=0$ for all integer $r\geq0$. Let $W\subseteq[n]$ of size $t_a+t_b+r+1$ and $A$ be any subset of $W$ with $|A|=t_b+r$, where $r\geq0$. Since $\Delta$ does not contains a face of dimension $t_b-b$, it follows that $$\Delta[W]=\bigcup_{B\subseteq A:~|B|=b}\Delta[W\setminus B].$$
By \ref{ab2} (taking $s=s'=t_b-b+r$ and $l=b$), it follows that $$\widetilde{H}_{t_a-a+t_b-b+r}(\Delta[W];K)=0.$$
\end{proof}

\begin{theorem}\label{ab4}
If $I$ is a monomial ideal of $S$, $b\geq 1$ and $a\geq b-1$ are integers such that $a+b\leq\mathrm{proj~dim}(S/I)$, then $$t_{a+b}\leq t_a+t_1+t_2+\cdots+t_b-\frac{b(b-1)}{2}.$$
\end{theorem}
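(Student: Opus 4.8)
The plan is to reduce to the squarefree (Stanley--Reisner) case and then iterate Theorem~\ref{ab3}. First I would invoke polarization: for any monomial ideal $I$ there is a squarefree monomial ideal $I^{\mathrm{pol}}=I_\Delta$ in a larger polynomial ring with $\beta_{i,j}^S(S/I)=\beta_{i,j}^{S'}(S'/I^{\mathrm{pol}})$ for all $i$, and in particular the same maximal shifts $t_i$ and the same projective dimension. Hence it suffices to prove the inequality when $I=I_\Delta$. The point of doing this is that Theorem~\ref{ab3} gives $t_{a+b}\le t_a+t_b$ whenever $\dim(\Delta)<t_b-b$, and the right-hand side of the desired inequality is exactly what you get by applying a one-step bound $t_{a+b}\le t_{a+b-1}+t_1$ repeatedly \emph{until} the dimension hypothesis of Theorem~\ref{ab3} kicks in.

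Concretely, I would argue by induction on $b$. The base case $b=1$ is the known inequality $t_{a+1}\le t_a+t_1$ (so the claimed bound reads $t_{a+1}\le t_a+t_1-0$, true). For the inductive step with $b\ge 2$, distinguish two cases according to $\dim(\Delta)$. If $\dim(\Delta)<t_b-b$, then Theorem~\ref{ab3} directly yields $t_{a+b}\le t_a+t_b$, and since each $t_i\ge t_1\ge\cdots$ is at least $\ge i$ for $1\le i\le b$ (indeed $\beta_{i,j}\ne 0$ forces $j\ge i+1$, so $t_i\ge i+1$, hence $t_1+\cdots+t_b-\tfrac{b(b-1)}{2}\ge \sum_{i=1}^b (i+1)-\tfrac{b(b-1)}{2}=\tfrac{b(b+3)}{2}-\tfrac{b(b-1)}{2}=2b\ge t_b$ is the wrong direction) --- so this naive comparison does not immediately close the gap and I must be more careful. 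The cleaner route: if $\dim(\Delta)\ge t_b-b$, I would instead peel off one homological degree. Write $a+b=(a+1)+(b-1)$ with $a+1\ge (b-1)-1$ still holding, apply the $b=1$ bound $t_{a+b}\le t_{a+b-1}+t_1$, and then apply the induction hypothesis to $t_{a+b-1}=t_{(a)+(b-1)}$ (valid since $a\ge b-1>(b-1)-1$ and $a+(b-1)\le\operatorname{proj\,dim}$), obtaining $t_{a+b}\le t_a+t_1+\cdots+t_{b-1}-\tfrac{(b-1)(b-2)}{2}+t_1$. This overshoots the target by $t_1-b+1$, so brute iteration of the trivial bound is not tight enough either; the constant $-\tfrac{b(b-1)}{2}$ must come from somewhere sharper.

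The sharper input is Proposition~\ref{ab2} applied with $s=l=b$: I would show directly that $\beta_{a+b,\,t_a+t_1+\cdots+t_b-\frac{b(b-1)}{2}+r+1}(S/I_\Delta)=0$ for all $r\ge 0$ by a Mayer--Vietoris/Lemma~\ref{ab1} decomposition of $\Delta[W]$ into subcomplexes $\Delta[W\setminus B]$, $|B|=b$, exactly as in the proof of Theorem~\ref{ab3}, but now controlling the $b$-fold intersections using the shifts $t_1,\dots,t_b$ of the \emph{link} subcomplexes rather than a single $t_b$. The key identity is that $\Delta[W\setminus B_1]\cap\cdots\cap\Delta[W\setminus B_k]=\Delta[W\setminus(B_1\cup\cdots\cup B_k)]$, and $|B_1\cup\cdots\cup B_k|$ ranges over $b,b+1,\dots,kb$; vanishing of $\widetilde H_{\bullet}$ of an induced subcomplex on a set of size $t_j+\text{something}$ is governed by $\beta_{j,\,t_j+1}=0$, and summing the ``budget'' $t_j$ used at the $j$-th nested intersection against the homological-degree drop of $1$ per intersection produces precisely the correction term $\sum_{j=1}^b t_j-\binom{b}{2}$.

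The main obstacle will be the bookkeeping in this nested Mayer--Vietoris argument: setting up the induction so that at depth $k$ one is allowed to spend shift $t_k$ (and not more), verifying that the hypothesis $a\ge b-1$ is exactly what guarantees the homological indices stay in the range where $\beta_{a,*}$ and $\beta_{j,*}$ vanishing can be invoked, and checking the edge cases where a union $\bigcup_{B}\Delta[W\setminus B]$ fails to equal $\Delta[W]$ (which is where the dimension hypothesis of Theorem~\ref{ab3} was used, and which here must be handled by the generic choice of $A\subseteq W$ together with $|W|$ large). I expect the combinatorial identity $\sum_{k=1}^{b}\bigl(t_k-1\bigr)+1=\sum t_k-\tfrac{b(b-1)}{2}$ --- wait, that is $\sum t_k-(b-1)$, not $\binom b2$ --- so in fact the drop must be $\sum_{k=2}^b (k-1)=\binom b2$ coming from the fact that the $k$-th intersection lowers homological degree by $k-1$ relative to a naive count, not by $1$; pinning down that this is the correct accounting is the crux.
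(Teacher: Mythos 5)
Your proposal correctly identifies that naive iteration of $t_{c+1}\le t_c+t_1$ overshoots, and that something sharper is needed, but it stops short of supplying that sharper ingredient; the final paragraph is an unresolved sketch, as you acknowledge yourself. The missing idea is not a deeper nested Mayer--Vietoris decomposition with $l=b$ and ``budgets'' $t_1,\dots,t_b$ spent at successive intersection depths. It is a strengthened \emph{one-step} inequality: for every $d\ge 1$ and $d-1\le c\le\mathrm{proj\,dim}(S/I)$ one has $t_{c+1}\le t_c+t_d-d+1$. Once this is available, the theorem follows by telescoping, applying it at step $b'$ with $c=a+b'-1$ and $d=b'$; the terms $-(b'-1)$ accumulate to exactly $-\binom{b}{2}$, which is the accounting you were groping for at the end.

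The proof of that one-step inequality is where your approach diverges essentially from what is needed. Suppose $\beta_{c+1,\,t_c+t_d-d+r+2}(S/I_\Delta)\ne 0$, so some $W$ with $|W|=t_c+t_d-d+r+2$ has $\widetilde H_{t_c-c+t_d-d+r}(\Delta[W];K)\ne 0$, hence $\mathrm{proj\,dim}(S/I_{\Delta[W]})\ge c+1$. The key dichotomy: if every minimal generator of $I_{\Delta[W]}$ had degree $\ge t_d-d+r+2$, then $\beta_{d,x}(S/I_{\Delta[W]})=0$ for all $x\le t_d+r$, and combined with Hochster's formula (which bounds $\beta_{d,x}(S/I_{\Delta[W]})$ by $\beta_{d,x}(S/I_\Delta)=0$ for $x>t_d$) this forces $\mathrm{proj\,dim}(S/I_{\Delta[W]})<d\le c+1$, a contradiction. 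So $I_{\Delta[W]}$ has a minimal generator of degree at most $t_d-d+r+1$, i.e.\ $W$ contains a non-face of that size; extending it to $A\subseteq W$ with $|A|=t_d-d+r+1$ gives $\Delta[W]=\bigcup_{B\subseteq A,\,|B|=1}\Delta[W\setminus B]$, and Proposition~\ref{ab2} with $s=s'=t_d-d+r$ and $l=1$ kills the homology. Note this uses Proposition~\ref{ab2} only with $l=1$ (a single-level cover by vertex deletions); the sharpness comes entirely from the dichotomy controlling the size of $A$, not from controlling $k$-fold intersections with different shifts $t_k$ as you proposed. Your route would also have to confront the fact that $\bigcup_{B}\Delta[W\setminus B]$ need not equal $\Delta[W]$ without such a generator-degree argument --- you flag this as an ``edge case,'' but it is in fact the heart of the matter.
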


\begin{proof}
By polarization, we may assume that $I$ is a squarefree ideal, and so $I=I_{\Delta}$, where $\Delta$ is a simplicial complex on $[n']$. First, we prove that
\begin{equation}\label{eq1}
t_{c+1}\leq t_{c}+t_d-d+1,
\end{equation}
for all $d\geq1$ and $d-1\leq c\leq\mathrm{proj~dim}(S/I)$. Assume on contrary, that $\beta_{c+1,t_c+t_d-d+r+2}(S/I)\neq0$ for some $r\geq0$. It follows that there exists a subset $W$ of $[n']$ so that $|W|=t_c+t_d-d+r+2$ and $\widetilde{H}_{t_c-c+t_d-d+r}(\Delta[W];K)\neq0$. In particular, $\mathrm{proj~dim}(S/I_{\Delta[W]})\geq c+1$.\\ If $\deg(m)\geq t_d-d+r+2$ for all minimal generator $m$ of $I_{\Delta[W]}$, then $\beta_{1,x}(S/I_{\Delta[W]})=0$ for all $x\leq t_d-d+r+1$. So $\beta_{d,x}(S/I_{\Delta[W]})=0$ for all $x\leq t_d-d+r+d=t_d+r$. It follows that $\mathrm{proj~dim}(S/I_{\Delta[W]})< d\leq c+1$. This is a contradiction.

We obtain that there is a subset $A$ of $W$ with $|A|=t_d-d+r+1$ and $$\Delta[W]=\bigcup_{B\subseteq A:~|B|=1}\Delta[W\setminus B].$$
By \ref{ab2} (taking $s=s'=t_d-d+r$ and $l=1$), it follows that $$\widetilde{H}_{t_c-c+t_d-d+r}(\Delta[W];K)=0,$$
a contradiction.

Now, we prove by induction on $1\leq b'\leq b$ that $$t_{a+b'}\leq t_a+t_1+t_2+\cdots+t_{b'}-\frac{b'(b'-1)}{2}.$$ The case $b'=1$ follows by (\ref{eq1}). Let $b'>1$. By the induction hypothesis, we have
\begin{equation*}
\begin{split}
t_{a+b'}=t_{(a+b'-1)+1} & \leq t_{a+b'-1}+t_{b'}-b'+1 \\
 & \leq t_a+t_1+\cdots+t_{b'-1}-\frac{(b'-1)(b'-2)}{2}+t_{b'}-b'+1\\
 &=t_a+t_1+t_2+\cdots+t_{b'}-\frac{b'(b'-1)}{2}.
\end{split}
\end{equation*}
\end{proof}

\section{The Taylor resolution}
Let $I$ be a monomial ideal with $G(I)=\{m_1,\dots,m_r\}$. For a subset $F$ of $G(I)$, set $\lcm(F)=\lcm\{m_i:~m_i\in F\}$ and define a formal symbol $[F]$ with multidegree equal to $\lcm(F)$. Let $T_0=S$ and for each $i\geq1$, let $T_i$ be the free $S$-module with basis $\{[F]:~|F|=i\}$. Note that $T_i$ is a multigraded $S$-module.

Let $\phi_0:T_0\rightarrow S/I$ be the canonical homomorphism and define the multigraded differential $\phi_{i}:T_i\rightarrow T_{i-1}$ by $$[F]\longmapsto\sum_{k=1}^{i}(-1)^{k-1}\cdot\frac{\lcm(F)}{\lcm(F\setminus\{m_{j_k}\})}\cdot[F\setminus\{m_{j_k}\}]$$
where $F=\{m_{j_1},\dots,m_{j_i}\}$, written with the indices in increasing order.

The free resolution $$\mathbb{T}:~0\longrightarrow T_r\overset{\phi_r}{\longrightarrow}\cdots\longrightarrow T_1\overset{\phi_1}{\longrightarrow} T_0\overset{\phi_0}{\longrightarrow} S/I\longrightarrow 0$$
is called the \emph{Taylor resolution} of $I$.

We denote by $H_a(\overline{\mathbb{T}})$ the a-th homology group, of the chain complex
$$\overline{\mathbb{T}}:~0\longrightarrow T_r\otimes_{S}K\overset{\phi_r\otimes_{S}K}{\longrightarrow}\cdots\longrightarrow T_1\otimes_{S}K\overset{\phi_1\otimes_{S}K}{\longrightarrow} T_0\otimes_{S}K\longrightarrow 0.$$
Note that $\beta_{a,b}(S/I)=\dim_K(H_a(\overline{\mathbb{T}}))_b$ and for all $0\leq j\leq r$ $$T_j\otimes_{S}K\cong T_j\otimes_{S}S/M\cong T_j/MT_j=\overline{T}_j$$
where $M$ is the maximal ideal $(x_1,\dots,x_n)$ of $S$.

\begin{proposition}\label{ab5}
Let $I$ be a monomial ideal with $G(I)=\{m_1,\dots,m_r\}$. If there exists $0\neq[F]\in H_{a+b}(\overline{\mathbb{T}})$ such that $\deg(\lcm(F))=t_{a+b}$, then $t_{a+b}\leq t_a+t_b$.
\end{proposition}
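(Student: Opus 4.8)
I would reduce the statement to two degree estimates for ``halves'' of the symbol $F$. Write $F=F_{1}\sqcup F_{2}$ with $|F_{1}|=a$ and $|F_{2}|=b$; if $a=0$ or $b=0$ the asserted inequality reads $t_{a+b}\le t_{a+b}$, so I may assume $a,b\ge 1$. The aim is to show that $[F_{1}]$ and $[F_{2}]$ are again nonzero homology classes of $\overline{\mathbb T}$, in homological degrees $a$ and $b$; everything else is bookkeeping with lcm's.

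First I would extract the combinatorial meaning of the hypothesis. Since $0\neq[F]\in H_{a+b}(\overline{\mathbb T})$, in particular $(\phi_{a+b}\otimes_{S}K)([F])=0$; in the formula for $\phi_{a+b}$ the coefficient of $[F\setminus\{m\}]$ is $\pm\,\lcm(F)/\lcm(F\setminus\{m\})$, which is a unit of $S$ (hence survives mod $M$) exactly when $\lcm(F\setminus\{m\})=\lcm(F)$. So $\lcm(F\setminus\{m\})$ properly divides $\lcm(F)$ for every $m\in F$; equivalently each $m\in F$ has a variable $x_{i(m)}$ whose exponent in $m$ strictly exceeds its exponent in all other elements of $F$, and $m\mapsto i(m)$ is injective. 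This ``strict contribution'' property is inherited by every subset: if $F'\subseteq F$ then $\lcm(F'\setminus\{m\})\subsetneq\lcm(F')$ for all $m\in F'$, so $[F']$ is a cycle of $\overline{\mathbb T}$ in homological degree $|F'|$. In particular $[F_{1}],[F_{2}]$ are cycles in degrees $a,b$, and $\lcm(F)=\lcm(\lcm(F_{1}),\lcm(F_{2}))$ gives $\deg\lcm(F)\le\deg\lcm(F_{1})+\deg\lcm(F_{2})$. Thus it suffices to prove $[F_{1}]$ represents a nonzero class in $H_{a}(\overline{\mathbb T})$ (and $[F_{2}]$ in $H_{b}(\overline{\mathbb T})$): then $\beta_{a,\deg\lcm(F_{1})}(S/I)=\dim_{K}\bigl(H_{a}(\overline{\mathbb T})\bigr)_{\deg\lcm(F_{1})}\neq 0$, so $\deg\lcm(F_{1})\le t_{a}$, likewise $\deg\lcm(F_{2})\le t_{b}$, and $t_{a+b}=\deg\lcm(F)\le\deg\lcm(F_{1})+\deg\lcm(F_{2})\le t_{a}+t_{b}$.

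To prove that $[F_{1}]$ is not a boundary I would argue by contradiction. Suppose $[F_{1}]=(\phi_{a+1}\otimes_{S}K)(\zeta)$; passing to the relevant multigraded strand, $\zeta=\sum_{G}c_{G}[G]$ with $|G|=a+1$ and $\lcm(G)=\lcm(F_{1})$. No element of $F_{2}$ divides $\lcm(F_{1})$ (its certifying variable $x_{i(m)}$ separates it), so each such $G$ is disjoint from $F_{2}$ and $\lcm(G\cup F_{2})=\lcm(F)$; put $\widetilde\zeta:=\sum_{G}c_{G}[G\cup F_{2}]\in\overline T_{a+b+1}$. Applying $\phi_{a+b+1}\otimes_{S}K$: deleting an element $m\in F_{2}$ from $G\cup F_{2}$ lowers the exponent of $x_{i(m)}$ below that of $\lcm(F)$, so every such term dies mod $M$; deleting an element of $G$ reproduces the corresponding term of $(\phi_{a+1}\otimes_{S}K)(\zeta)$ with $F_{2}$ appended, \emph{except} for ``correction'' terms $[(G\setminus\{m\})\cup F_{2}]$ occurring when $\lcm(G\setminus\{m\})$ properly divides $\lcm(F_{1})$ but $\lcm(F_{2})$ makes up the deficiency so that $\lcm((G\setminus\{m\})\cup F_{2})=\lcm(F)$. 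Hence $(\phi_{a+b+1}\otimes_{S}K)(\widetilde\zeta)=\pm[F]+Y$, where $Y$ collects the correction terms. If $Y=0$ then $[F]$ is a boundary, contradicting $0\neq[F]\in H_{a+b}(\overline{\mathbb T})$.

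The main obstacle is exactly the handling of $Y$: one must show the correction terms are absent or jointly form a boundary. I expect this to be forced by the strict‑contribution structure together with a careful choice of the partition $F=F_{1}\sqcup F_{2}$: choosing $F_{1}$ so that the variables certifying its elements are precisely those in which $\lcm(F_{2})$ stays strictly below $\lcm(F_{1})$ prevents $\lcm(F_{2})$ from compensating any proper sub‑$\lcm$ of $\lcm(F_{1})$, and then $Y=0$ outright; when no such partition is available one should instead induct on $\deg\lcm(F)$, since each summand of $Y$ lives in a multidegree whose ``$F_{1}$‑part'' is strictly smaller, so the correction terms can be peeled off degree by degree. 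Once $Y$ is controlled, the argument closes as above, and the symmetric choice of partition gives $\deg\lcm(F_{2})\le t_{b}$, completing the proof.
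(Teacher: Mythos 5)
Your proposal follows the same route as the paper's proof: split $F=F_1\sqcup F_2$, observe that the kernel condition $\lcm(F)/\lcm(F\setminus\{m\})\notin K$ for all $m\in F$ passes to subsets so that $[F_1]$ and $[F_2]$ are cycles, and try to show they are not boundaries by appending $F_2$ to a putative preimage $\zeta$ of $[F_1]$ and landing on $[F]$, contradicting $[F]\notin \Ima\,\overline{\phi}_{a+b+1}$; the conclusion $t_{a+b}=\deg\lcm(F)\le\deg\lcm(F_1)+\deg\lcm(F_2)\le t_a+t_b$ is then the same. Everything up to and including the observations that each $G$ with $\lcm(G)=\lcm(F_1)$ is disjoint from $F_2$ and that the $F_2$-deletion terms die mod the maximal ideal is correct and coincides with the paper.

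The problem is that you do not finish. Your $Y$ collects the terms $[(G\setminus\{m\})\cup F_2]$ with $\lcm(G\setminus\{m\})\subsetneq\lcm(F_1)$ but $\lcm((G\setminus\{m\})\cup F_2)=\lcm(F)$, and the desired contradiction follows only if $Y$ vanishes or is itself a boundary; you propose two ways to handle this and carry out neither. The first (re-choosing the partition) has essentially no freedom to exploit: a correction term can only lose a variable $y$ with $\deg_y\lcm(F_2)\ge\deg_y\lcm(F_1)$, and no choice of which $a$ elements of $F$ form $F_1$ visibly excludes such $y$. The second (induction on $\deg\lcm(F)$) does not even set up: every basis element occurring in $Y$ has lcm exactly $\lcm(F)$ --- that is precisely its survival condition --- so there is no strictly smaller multidegree to descend to, and the ``$F_1$-part'' is not a grading of $\overline{\mathbb{T}}$. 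As written, your argument proves the statement only under the unestablished assumption $Y=0$. It is worth noting that the point at which you stall is exactly the step the paper asserts without comment: its displayed identity $[F]=\alpha_1\overline{\phi}_{a+b+1}[\widehat{F}_{1,1}]+\cdots+\alpha_s\overline{\phi}_{a+b+1}[\widehat{F}_{1,s}]$ is precisely the claim that these extra surviving terms cancel. You have correctly located the crux of the argument, but you have not resolved it, so the proof is incomplete.
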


\begin{proof}
With out loos of generality, assume that $F=\{w_1,\dots,w_{a+b}\}$ and $$G(I)=\{m_1,\dots,m_{r-(a+b)},w_1,\dots,w_{a+b}\}.$$ Set $F_1=\{w_1,\dots,w_a\}$ and $F_2=\{w_{a+1},\dots,w_{a+b}\}$. Since $[F]\in \ker \overline{\phi}_{a+b}$, it follows that $\frac{\lcm(F)}{\lcm(G)}\notin K$, for all $G\subsetneq F$. So we have that $[F_1]\in \ker \overline{\phi}_{a}$ and $[F_2]\in \ker \overline{\phi}_{b}$. If $[F_1]\in \Ima \overline{\phi}_{a+1}$, then there exist $[F_{1,1}],\dots,[F_{1,s}]\in \overline{T}_{a+1}$ and $\alpha_1,\dots,\alpha_s\in K$ such that $$[F_1]=\alpha_1\overline{\phi}_{a+1}[F_{1,1}]+\cdots+\alpha_s\overline{\phi}_{a+1}[F_{1,s}].$$
Note that $\lcm(F_1)=\lcm(F_{1,j})$ for all $1\leq j\leq s$, so $F_{1,j}\cap F_2=\emptyset$ for all $1\leq j\leq s$. For all $1\leq j\leq s$ let $\widehat{F}_{1,j}=F_{1,j}\cup F_2$ and assume that the elements of $\widehat{F}_{1,j}$ are ordered as the order of $G(I)$. We obtain that $[F]=\alpha_1\overline{\phi}_{a+b+1}[\widehat{F}_{1,1}]+\cdots+\alpha_s\overline{\phi}_{a+b+1}[\widehat{F}_{1,s}]$,
which contradicts to the fact that $[F]\notin \Ima\overline{\phi}_{a+b+1}$. Similarly, $[F_2]\notin \Ima \overline{\phi}_{b+1}$.

It follows that $$t_{a+b}=\deg(\lcm(F))\leq \deg(\lcm(F_1))+\deg(\lcm(F_2))\leq t_a+t_b.$$
\end{proof}

\begin{corollary}\label{ab6}
Let $I$ be a monomial ideal with $G(I)=\{m_1,\dots,m_r\}$. If $\frac{\lcm(m_1,\dots,m_r)}{\lcm(m_1,\dots,\widehat{m}_i,\dots,m_r)}\notin K$ for all $i$, where $\widehat{m}_i$ means that $m_i$ is omitted, then $t_{a+b}\leq t_a+t_b$.
\end{corollary}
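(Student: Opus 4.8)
The plan is to reduce the statement to Proposition \ref{ab5} by exhibiting, for the full generator set, a top-degree cycle in the tensored Taylor complex. Write $F=G(I)=\{m_1,\dots,m_r\}$, so $[F]\in\overline{T}_r$ and $\deg(\lcm(F))=\deg\lcm(m_1,\dots,m_r)$. The hypothesis says precisely that $\dfrac{\lcm(F)}{\lcm(F\setminus\{m_i\})}\notin K$ for every $i$, i.e.\ every coefficient in $\overline{\phi}_r[F]$ vanishes, so $[F]\in\ker\overline{\phi}_r$. Since $\overline{T}_{r+1}=0$, we have $[F]\notin\operatorname{Ima}\overline{\phi}_{r+1}$ trivially, hence $0\neq[F]\in H_r(\overline{\mathbb{T}})$. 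I should also record that $r=\operatorname{proj~dim}(S/I)$: the homology class $[F]$ in the top slot $T_r$ of the Taylor resolution is nonzero, so $\beta_r(S/I)\neq0$, and the Taylor resolution has length $r$, forcing $\operatorname{proj~dim}(S/I)=r$.

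Next I would verify that $\deg\lcm(F)=t_r$. By the homology computation $\beta_{r,\deg\lcm(F)}(S/I)=\dim_K(H_r(\overline{\mathbb{T}}))_{\deg\lcm(F)}\neq0$, so $t_r\geq\deg\lcm(F)$. Conversely, every basis element of $T_r$ is $[F]$ itself (there is only one subset of $G(I)$ of size $r$), and its multidegree is $\lcm(F)$; since $\beta_{r,j}(S/I)\leq\dim_K(\overline{T}_r)_j$, we get $\beta_{r,j}=0$ for $j\neq\deg\lcm(F)$, hence $t_r=\deg\lcm(F)$. Thus $[F]$ is a nonzero class in $H_r(\overline{\mathbb{T}})$ with $\deg\lcm(F)=t_r=t_{a+b}$ whenever $a+b=r$.

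Now apply Proposition \ref{ab5}. For any $a,b\geq0$ with $a+b\leq\operatorname{proj~dim}(S/I)=r$, consider the case $a+b=r$ first: Proposition \ref{ab5} applied to the class $0\neq[F]\in H_{a+b}(\overline{\mathbb{T}})$ with $\deg\lcm(F)=t_{a+b}$ yields $t_{a+b}\leq t_a+t_b$. For $a+b<r$ the inequality $t_{a+b}\leq t_a+t_b$ needs a separate argument; the cleanest route is to note that Theorem \ref{ab4} already gives $t_{c+1}\leq t_c+t_1$ for every $c$ in range, so inductively $t_a+t_b$ can be built up, but more directly one observes that the subadditivity in the top slot combined with Theorem \ref{ab4}'s inequality $t_{(a+b-1)+1}\le t_{a+b-1}+t_1-1+1$ propagates downward; alternatively, since the corollary as stated in the abstract restricts to $a+b\le\operatorname{proj~dim}(S/I)$ and the interesting content is the top case, I would state and prove the top case $a+b=r$ cleanly and then remark that all lower cases follow from Theorem \ref{ab4} (which already establishes $t_{a+b}\le t_a+t_1+\cdots+t_b-\binom b2\le t_a+t_b$ is \emph{not} immediate, so in fact the honest claim is just the top-slot case plus the known $t_{a+1}\le t_a+t_1$).

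The main obstacle is the bookkeeping for $a+b<r$: Proposition \ref{ab5} only delivers subadditivity at the degree where a \emph{top-degree} Taylor cycle exists, and there is no reason a cycle of multidegree exactly $t_{a+b}$ sits in $T_{a+b}$ for $a+b<r$. So the real content of the corollary is the equality case $a+b=\operatorname{proj~dim}(S/I)$, and I would make sure the write-up either restricts to that case or invokes the earlier general monomial-ideal results to cover the rest; the only genuinely new input is the two observations that the hypothesis makes $[G(I)]$ a cycle and that this cycle lives in multidegree $t_r$.
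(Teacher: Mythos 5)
There is a genuine gap: your argument only establishes the case $a+b=r$, whereas the corollary asserts $t_{a+b}\leq t_a+t_b$ for all $a,b$ with $a+b\leq\mathrm{proj~dim}(S/I)$, and your attempts to cover $a+b<r$ via Theorem \ref{ab4} do not close it (as you yourself concede, $t_a+t_1+\cdots+t_b-\frac{b(b-1)}{2}\leq t_a+t_b$ is not available). The missing idea is that the hypothesis on the full lcm \emph{descends to every subset} of $G(I)$. Indeed, $\frac{\lcm(G(I))}{\lcm(G(I)\setminus\{m_i\})}\notin K$ means there is a variable $x_j$ with $\deg_{x_j}(m_i)>\deg_{x_j}(m_k)$ for all $k\neq i$; consequently, for every $F\subseteq G(I)$ with $m_i\in F$ one also has $\frac{\lcm(F)}{\lcm(F\setminus\{m_i\})}\notin K$. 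Hence every coefficient of every differential $\overline{\phi}_j$ lies in the maximal ideal, so $\overline{\mathbb{T}}$ has zero differentials and the Taylor resolution is minimal.

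Once this is observed, for each $a+b\leq r=\mathrm{proj~dim}(S/I)$ one may choose a subset $F$ of size $a+b$ with $\deg(\lcm(F))=t_{a+b}$; since all differentials vanish, $[F]$ is automatically a nonzero class in $H_{a+b}(\overline{\mathbb{T}})$ of the required degree, and Proposition \ref{ab5} yields $t_{a+b}\leq t_a+t_b$ in every homological degree, not only the top one. This is exactly the paper's (very terse) proof. Your top-slot computations ($F=G(I)$ is a cycle, $t_r=\deg\lcm(G(I))$, $\mathrm{proj~dim}(S/I)=r$) are correct as far as they go, but without the descent of the hypothesis to proper subsets the corollary as stated is not proved.
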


\begin{proof}
Let $[F]$ be a generator of $T_{a+b}$ with $\deg(\lcm(F))=t_{a+b}$. By the assumption, it follows that $[F]\in \ker \overline{\phi}_{a+b}$ and $[F]\notin \Ima \overline{\phi}_{a+b+1}$. Hence the assertion follows from Proposition \ref{ab5}.
\end{proof}

\begin{example}
Let $S=K[a,b,c]$ and $I$ be the ideal of $S$ which generated by $G(I)=\{a^4bc,b^3c^2,c^5a^3\}$. Not that
$$\frac{\lcm(G(I))}{\lcm(b^3c^2,c^5a^3)}=a,~\frac{\lcm(G(I))}{\lcm(a^4bc,c^5a^3)}=b^2,~\frac{\lcm(G(I))}{\lcm(a^4bc,b^3c^2)}=c^3.$$
So by Corollary \ref{ab6}, $I$ satisfies the subadditivity condition.
\end{example}


\end{document}